\documentclass[reqno]{amsart}
\title[Proofs for Conjectures]{Proofs for Andrews' Conjectures 5 and 6 on $v_1(q)$}
\usepackage{amssymb,amsmath,amsthm,epsfig,graphics,latexsym,hyperref}
\theoremstyle{definition}

\theoremstyle{plain}
\newtheorem{lemma}      {Lemma}

\newtheorem{proposition}{Proposition}
\newtheorem{theorem}    {Theorem}

\newtheorem{corollary}  {Corollary}

\newtheorem*{aconjecture}{Conjecture}
\theoremstyle{remark}

\numberwithin{equation}{section}

\newcommand{\Z}{\mathbb{Z}}
\newcommand{\N}{\mathbb{N}}
\newcommand{\R}{\mathbb{R}}
\newcommand{\dist}{\operatorname{dist}}
\newcommand{\sgn}{\operatorname{sgn}}

\begin{document}
\author[M. El Bachraoui]{Mohamed El Bachraoui}
\address{Dept. Math. Sci,
United Arab Emirates University, PO Box 15551, Al-Ain, UAE}
\email{melbachraoui@uaeu.ac.ae}
\keywords{integer partitions, $q$-series, asymptotics.}
\subjclass[2000]{11P81; 11P82; 33B30}

\begin{abstract}
Folsom, Males, Rolen, and Storzer recently proved Andrews' Conjecture~4 for the coefficients of
\[
  v_1(q)=\sum_{n\ge 0}\frac{q^{n(n+1)/2}}{(-q^2;q^2)_n}=\sum_{n\ge 0}V_1(n)q^n.
\]
They also proved a refined density-one version of Andrews' Conjecture~3.
In this paper we prove Andrews' Conjectures~5 and~6.
Our proof relies on an investigation of the simple zeros of the trigonometric
factor in the Folsom--Males--Rolen--Storzer asymptotic
and showing that the relevant quadratic sequence stays a positive distance from the integers infinitely often. The argument is unconditional.
\end{abstract}
\date{\textit{\today}}
\maketitle

\section{Introduction}

In \cite{Andrews86}, Andrews studied several $q$-series from Ramanujan's lost notebook. Among them is
\begin{equation}\label{eq:v1-def}
  v_1(q):=\sum_{n\ge 0}\frac{q^{n(n+1)/2}}{(-q^2;q^2)_n}=\sum_{n\ge 0}V_1(n)q^n.
\end{equation}
Following the presentation in \cite[p.~2]{FMRS}, we record Andrews' conjectures \cite[p.~710]{Andrews86} in the same grouped form. While he noted that the growth of $|V_1(n)|$ ``is not very smooth,'' Andrews conjectured that there ``appear[s] to be great sign regularity.'' More precisely, he states the following conjectures.

\begin{aconjecture}[Conjecture~3 \cite{Andrews86}]
We have that $|V_1(n)|\to\infty$ as $n\to\infty$.
\end{aconjecture}

\begin{aconjecture}[Conjecture~4 \cite{Andrews86}]
For almost all $n$, $V_1(n)$, $V_1(n+1)$, $V_1(n+2)$, and $V_1(n+3)$ are two positive and two negative numbers.
\end{aconjecture}

\begin{aconjecture}[Conjecture~5 \cite{Andrews86}]
For $n\ge 5$ there is an infinite sequence $N_5=293$, $N_6=410$, $N_7=545$, $N_8=702$, \ldots, $N_n>10n^2$, \ldots\ such that $V_1(N_n)$, $V_1(N_n+1)$, $V_1(N_n+2)$ all have the same sign.
\end{aconjecture}

\begin{aconjecture}[Conjecture~6 \cite{Andrews86}]
With reference to Conjecture~3, the numbers
\[
  |V_1(N_n)|,\qquad |V_1(N_n+1)|,\qquad |V_1(N_n+2)|
\]
contain a local minimum of the sequence $|V_1(j)|$.
\end{aconjecture}

In the recent paper \cite[Theorem~1.1 and the following remark]{FMRS}, Folsom, Males, Rolen, and Storzer proved Andrews' Conjecture~4 
exactly and proved a refined density-one version of Andrews' Conjecture~3 for the coefficients $V_1(n)$. 
They also gave a detailed heuristic discussion of Conjectures~5 and~6 in their Section~6.
The purpose of this paper is to confirm Conjectures~5 and~6, as in the following theorem.

\begin{theorem}\label{thm:main}
There exists a sequence of integers $(N_n)_{n\ge 5}$ satisfying
\[
  N_5=293,\qquad N_6=410,\qquad N_7=545,\qquad N_8=702,\qquad N_n>10n^2\quad (n\ge 9),
\]
such that for every $n\ge 5$ the three numbers
\[
  V_1(N_n),\quad V_1(N_n+1),\quad V_1(N_n+2)
\]
have the same sign, and one of
\[
  |V_1(N_n)|,\quad |V_1(N_n+1)|,\quad |V_1(N_n+2)|
\]
is a local minimum of the sequence $|V_1(j)|$.
\end{theorem}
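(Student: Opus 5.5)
The plan is to combine the Folsom--Males--Rolen--Storzer asymptotic for $V_1(n)$ with a direct finite check for small indices. From \cite{FMRS} we take the main term
\[
V_1(n)= A\, n^{-\beta} e^{c\sqrt{n}}\Bigl(\cos\bigl(\theta(n)\bigr)+O\bigl(n^{-1/2}\bigr)\Bigr),
\]
where $A$ and $c>0$ are explicit constants, $\beta$ is fixed, and the phase $\theta(n)=\alpha\sqrt{n}+\gamma n+\delta$ is explicit. Its linear part $\gamma n$ has $\gamma$ a rational multiple of $\pi$ with denominator $4$; this is what produces the period-four sign pattern behind Conjecture~4. We split $n$ by residue mod $4$. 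On each residue class $n=4m+r$ the sign of $V_1(n)$ is governed by $\cos(\alpha\sqrt{n}+\phi_r)$, where the $\phi_r$ are constants. The sign pattern of four consecutive terms is then read off directly. It changes from ``two positive, two negative'' to ``three of the same sign'' exactly when the slowly varying phase $\alpha\sqrt{n}$ passes through a simple zero of the trigonometric factor, that is, when $\alpha\sqrt{n}+\phi_r\in \pi/2+\pi\Z$.

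\textbf{Locating the windows.} The $k$-th zero crossing occurs near $n\approx ((\pi/2+k\pi-\phi_r)/\alpha)^2$, which is a quadratic sequence in $k$. This matches $293,410,545,702$, whose second differences are roughly constant ($117,135,157$). For each large $k$ we define $N_k$ as the index near this crossing at which three consecutive terms share a sign. To prove that the signs really agree, the error term $O(n^{-1/2})$ must be beaten. Concretely, the three cosines at $N_k,N_k+1,N_k+2$ must be bounded away from $0$ by a margin larger than $C n^{-1/2}$. Near a simple zero, $\cos$ changes by about $\alpha/(2\sqrt n)$ per unit step in $n$. So the three values are of size $\asymp \|x_k\|\, n^{-1/2}$, where $x_k$ is the fractional position of the crossing relative to the integer grid. \emph{This is the main obstacle.} We need $x_k$, a quadratic expression in $k$ reduced mod $1$ after rescaling, to stay a positive distance from the integers for infinitely many $k$; only then does the main term dominate the error. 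We would handle it by cases. If the relevant coefficient is irrational, Weyl equidistribution of the quadratic sequence gives infinitely many $k$ with $\|x_k\|>1/4$. If it is rational, the sequence is periodic mod $1$, and we must show that some residue class avoids $0$. Passing to the subsequence of good $k$ is enough, because the theorem only requires $N_n>10n^2$ and not consecutive crossings. Reindexing the good $k$'s as $n\ge 9$ is then harmless, since the crossings grow like $k^2$ with a leading coefficient exceeding $10$, and we check this inequality. If the stated error term is too weak, we would first try to extract the next term of the FMRS expansion so that the error becomes $O(n^{-1})$.

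\textbf{Local minimum and small cases.} On the same windows, $|V_1(j)|\approx A j^{-\beta}e^{c\sqrt j}|\cos\theta(j)|$. Within each residue class the modulus factor $|\cos|$ is smallest at the term nearest the zero. Along consecutive $j$, however, the four residue classes interleave. Among $N_k,N_k+1,N_k+2$ the term whose phase lies nearest the zero has $|\cos|\asymp \|x_k\| n^{-1/2}$, while its neighbours $j\pm1$ have $|\cos|$ of order $1$, because the linear phase $\gamma$ shifts them by $\pm\pi/2$ modulo $\pi$. The ratio of the growth factors $e^{c\sqrt{j\pm1}}/e^{c\sqrt j}=1+O(n^{-1/2})$ is negligible next to this gap, so that term is a strict local minimum of $|V_1(j)|$. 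Finally, for $n=5,6,7,8$ we verify directly, by computing $V_1(j)$ from \eqref{eq:v1-def} for $j\le 704$, that $N=293,410,545,702$ have the required sign and minimum properties.
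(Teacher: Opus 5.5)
Your route is the paper's: the FMRS asymptotic split by parity, the zeros $c\sqrt n=\pi(m+\tfrac14)$ of the trigonometric factor, a quadratic sequence kept a fixed distance from $\Z$ (by Weyl or by periodicity mod $1$), a subsequence to get $N_n>10n^2$, and a finite check of $293,410,545,702$. However, the domination step fails as you have set it up.

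You wrote the asymptotic with an \emph{additive} error, $\cos\theta(n)+O(n^{-1/2})$. The two endpoints of the triple lie within distance $2$ of the crossing. So their cosines are $O(n^{-1/2})$ with an absolute constant, however well $x_k$ is placed. Keeping $\|x_k\|\ge\delta$ therefore beats an error $Cn^{-1/2}$ only if $C$ happens to be small relative to $\delta$. Neither equidistribution nor periodicity can give you that, and ``extract the next term of the expansion'' is not a proof step.

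The missing point is that the FMRS error is multiplicative. The whole main term, including the factor $\cos(c\sqrt n)-(-1)^n\sin(c\sqrt n)$, is multiplied by $1+O(n^{-1/2})$. The only additive error is $O(n^{-1/2}e^{c\sqrt n/2})$, which is exponentially smaller. At an endpoint $N$ where $|F_-(c\sqrt N)|\asymp N^{-1/2}$, the error is therefore $O(e^{c\sqrt N}N^{-3/2})+O(N^{-1/2}e^{c\sqrt N/2})=o(e^{c\sqrt N}/N)$. The main term there is $\asymp e^{c\sqrt N}/N$. This is exactly how the paper fixes the signs, and it works for every fixed $\delta>0$.

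Three smaller points.
\begin{itemize}
\item In the rational case you still owe the proof that some residue class avoids $\Z$. The paper reduces this to $\alpha=\pi^2/c^2\notin\Z$ and proves that from a numerical enclosure of $D(e^{i\pi/3})$. The same enclosure gives $\alpha\approx 38.9>10$ for your growth check.
\item Your single amplitude $A$ hides that it is $\gamma_+\pm\gamma_-$ according to parity. The sign bookkeeping needs both of these to be positive.
\item Your claim that all three cosines at $N_k,N_k+1,N_k+2$ have size $\|x_k\|n^{-1/2}$ is wrong. The phase is $c\sqrt n-\tfrac{\pi}{2}n+\tfrac{\pi}{4}$, so the two endpoints (same parity) both sit next to the common zero of their parity class. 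The middle term, of the other parity, has $|\cos\theta|\asymp 1$. That only helps, and your local-minimum comparison of an endpoint with its opposite-parity neighbours is correct; it even survives your weaker error form.
\end{itemize}
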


Recall that the classical dilogarithm is
\[
  \operatorname{Li}_2(z):=\sum_{n\ge 1}\frac{z^n}{n^2}
  \qquad (|z|<1),
\]
and the Bloch--Wigner dilogarithm is
\[
  D(z):=\Im\bigl(\operatorname{Li}_2(z)\bigr)+\arg(1-z)\log|z|
  \qquad (z\in \mathbb{C}\setminus\{0,1\}),
\]
see~\cite[Chapter I]{ZagierDilog}.
Write
\begin{equation}\label{eq:constants}
  A_0:=\gamma_+ + \gamma_-,
  \qquad
  A_1:=\gamma_+ - \gamma_-
\end{equation}
where by \cite[Theorem~1.2(3)]{FMRS},
\begin{equation}\label{eq:gammas}
  \gamma_+=\frac{1}{2\sqrt[4]{3(2-\sqrt 3)}}>0,
  \qquad
  \gamma_-=\frac{1}{2\sqrt[4]{3(2+\sqrt 3)}}>0.
\end{equation}
Hence $A_0>0$. Also $2-\sqrt 3<2+\sqrt 3$, so $\gamma_+>\gamma_-$ and therefore $A_1>0$.
We also use the standard notation
\[
  e(x):=e^{2\pi i x},
  \qquad
  \dist(x,\Z):=\min_{k\in \Z}|x-k|.
\]

Folsom, Males, Rolen, and Storzer established the following asymptotic approximation for $V_1(n)$.

\begin{theorem}\label{thm:FMRS}\cite[Theorem~1.3]{FMRS}
As $n\to\infty$,
\begin{equation}\label{eq:FMRS}
\begin{split}
  V_1(n)
  ={}&(-1)^{\lfloor n/2\rfloor}
   \frac{e^{c\sqrt n}}{\sqrt n}
   \bigl(\gamma_+ + (-1)^n\gamma_-\bigr)
   \bigl(\cos(c\sqrt n)-(-1)^n\sin(c\sqrt n)\bigr)\\
   &\times\bigl(1+O(n^{-1/2})\bigr)
   +O\!\left(n^{-1/2}e^{c\sqrt n/2}\right).
\end{split}
\end{equation}
where $c=\sqrt{2|V|}$ and $V=D(e(1/6))i/8$. 
\end{theorem}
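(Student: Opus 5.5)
This theorem is quoted from \cite[Theorem~1.3]{FMRS}, and we use it as a black box. For completeness, here is how I would reprove it. The method is Wright's circle method, fed by an asymptotic expansion of $v_1(q)$ near the roots of unity that dominate.

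\emph{Step 1: identify the dominant singularities.} The factor $(-1)^{\lfloor n/2\rfloor}$ and the dependence on $n \bmod 2$ point to a period-$4$ phenomenon. This suggests the dominant contributions come from $q$ near the primitive fourth roots of unity $\zeta=\pm i$. Note that $(-q^2;q^2)_n$ vanishes to high order there, because $q^2\to -1$. I would first show that near other roots of unity, and in particular near $q=\pm1$, $v_1$ grows at most like $\exp(\tfrac12\Re(\cdot))$ of the main exponent.

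\emph{Step 2: local asymptotic expansion.} Set $q=\zeta e^{-t}$ with $\zeta=\pm i$ and $t\to0$ in a cone $\Re t>0$. I would treat the summand $q^{k(k+1)/2}/(-q^2;q^2)_k$ as a function of $x=kt$. Taking logarithms, Euler--Maclaurin gives a main term $\exp\bigl(\tfrac1t\phi(x)\bigr)$, where $\phi$ is built from $\operatorname{Li}_2$ evaluated along the relevant residue classes of $k \bmod 4$. The sum over $k$ then localizes at a stationary point $x_0$. The saddle equation should be algebraic and of the form $1-w+w^2=0$, so $w=e(\pm1/6)$. I would then evaluate the critical value $\phi(x_0)$ via the dilogarithm five-term relations: its real part cancels, and the result is $V=D(e(1/6))\,i/8$, purely imaginary. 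The Gaussian integral around the saddle supplies the constants $\gamma_\pm$. The two fourth roots $\pm i$, and the two residue classes of $n$, mix into $\gamma_+ \pm \gamma_-$.

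\emph{Step 3: circle method.} Write $V_1(n)=\frac{1}{2\pi i}\oint v_1(q)q^{-n-1}\,dq$. On the major arcs near $\pm i$, substitute the expansion $v_1(\zeta e^{-t})\approx \gamma\, t^{-1/2}\exp(\pm V/t)$, choosing the conjugate saddle for each sign. The resulting integral $\int \exp(V/t+nt)\,dt$ is a Bessel-type integral with asymptotics $\exp(2\sqrt{Vn})$. Since $V$ is purely imaginary, $2\sqrt{Vn}=c\sqrt n\,(1+i)$ with $c=\sqrt{2|V|}$. This gives modulus $e^{c\sqrt n}$ and phase $c\sqrt n$. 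Adding the complex-conjugate contribution turns the phase into $\cos(c\sqrt n)-(-1)^n\sin(c\sqrt n)$, and the root $\zeta^{-n}$ produces $(-1)^{\lfloor n/2\rfloor}$. On the minor arcs, the bound from Step~1 gives the error $O(n^{-1/2}e^{c\sqrt n/2})$.

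\emph{Main obstacle.} The hardest step is Step~2. The summand is not of fixed sign, so justifying the Euler--Maclaurin and stationary-phase analysis uniformly in the cone requires care. In addition, the vanishing of $(-q^2;q^2)_k$ near $\zeta=\pm i$ forces the residue classes of $k \bmod 4$ to be handled separately. I would first try to rewrite $v_1$ as a finite combination of indefinite (false/partial) theta-type series, in the manner of Andrews. I expect a uniform analysis to be more tractable in that form.
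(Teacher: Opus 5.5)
Your proposal matches the paper: the paper gives no proof of this statement and simply imports it from \cite[Theorem~1.3]{FMRS}, exactly as you do. The circle-method outline you add is optional and only heuristic. Near $q=\pm i$ the summands carry period-$8$ phases $(\pm i)^{k(k+1)/2}$ that average to zero on each parity class of $k$, so the naive Euler--Maclaurin main term cancels and the saddles $e(\pm 1/6)$ are genuinely complex, requiring a complex saddle-point analysis (e.g.\ Poisson summation plus contour deformation) rather than a real stationary point.
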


The following corollary is obtained by separating the even and odd cases in
Theorem~\ref{thm:FMRS}.

\begin{corollary}\label{cor:parity}
Define
\begin{equation}\label{eq:Fpm}
  F_-(x):=\cos x-\sin x,
  \qquad
  F_+(x):=\cos x+\sin x.
\end{equation}
Then for even $n$,
\begin{equation}\label{eq:even-asymp}
  V_1(n)
  =(-1)^{n/2}\frac{e^{c\sqrt n}}{\sqrt n}A_0 F_-(c\sqrt n)
   +O\!\left(\frac{e^{c\sqrt n}|F_-(c\sqrt n)|}{n}\right)
   +O\!\left(n^{-1/2}e^{c\sqrt n/2}\right),
\end{equation}
and for odd $n$,
\begin{equation}\label{eq:odd-asymp}
  V_1(n)
  =(-1)^{(n-1)/2}\frac{e^{c\sqrt n}}{\sqrt n}A_1 F_+(c\sqrt n)
   +O\!\left(\frac{e^{c\sqrt n}|F_+(c\sqrt n)|}{n}\right)
   +O\!\left(n^{-1/2}e^{c\sqrt n/2}\right).
\end{equation}
\end{corollary}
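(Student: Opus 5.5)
We split Theorem~\ref{thm:FMRS} according to the parity of $n$. For $n$ even we have $(-1)^n=1$, so $\gamma_++(-1)^n\gamma_-=A_0$ and $\cos(c\sqrt n)-(-1)^n\sin(c\sqrt n)=F_-(c\sqrt n)$. For $n$ odd we have $(-1)^n=-1$, giving $A_1$ and $F_+(c\sqrt n)$. In the odd case we also use $\lfloor n/2\rfloor=(n-1)/2$.

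The main term in \eqref{eq:FMRS} is a product $M_n\cdot(1+O(n^{-1/2}))$, where
\[
M_n:=(-1)^{\lfloor n/2\rfloor}\frac{e^{c\sqrt n}}{\sqrt n}A_{\epsilon}F_{\mp}(c\sqrt n),
\]
with $\epsilon\in\{0,1\}$ and the sign in $F_\mp$ determined by the parity of $n$. Expanding the product gives $M_n+M_n\cdot O(n^{-1/2})$.

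For the second piece we use that $A_0,A_1$ are absolute positive constants and that $|(-1)^{\lfloor n/2\rfloor}|=1$. Hence
\[
|M_n|\,O(n^{-1/2})=O\!\left(\frac{e^{c\sqrt n}}{\sqrt n}\cdot\frac{|F_\mp(c\sqrt n)|}{\sqrt n}\right)=O\!\left(\frac{e^{c\sqrt n}|F_\mp(c\sqrt n)|}{n}\right).
\]
This is exactly the middle error term in \eqref{eq:even-asymp} and \eqref{eq:odd-asymp}.

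The additive error $O(n^{-1/2}e^{c\sqrt n/2})$ from \eqref{eq:FMRS} is carried over unchanged.

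The only point needing care is that the implied constant in the relative factor $1+O(n^{-1/2})$ is uniform in $n$. This holds because it comes from a single asymptotic as $n\to\infty$. Restricting to the even or the odd subsequence preserves the $O$-bounds, so there is no real obstacle: the corollary is a direct bookkeeping consequence of Theorem~\ref{thm:FMRS}.
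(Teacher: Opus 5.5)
Your proposal is correct and follows the same route as the paper: you specialize the FMRS asymptotic by parity of $n$ to identify $A_0,F_-$ (even) and $A_1,F_+$ (odd), then expand the factor $1+O(n^{-1/2})$ to obtain the middle error term $O\bigl(e^{c\sqrt n}|F_\mp(c\sqrt n)|/n\bigr)$. The additive error term carries over unchanged, exactly as in the paper's proof.
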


\begin{proof}
If $n$ is even, then $(-1)^n=1$, so \eqref{eq:FMRS} becomes
\[
  V_1(n)
  =(-1)^{n/2}\frac{e^{c\sqrt n}}{\sqrt n}A_0F_-(c\sqrt n)
   \bigl(1+O(n^{-1/2})\bigr)
   +O\!\left(n^{-1/2}e^{c\sqrt n/2}\right).
\]
Expanding the factor $1+O(n^{-1/2})$ yields \eqref{eq:even-asymp}. The odd case is identical: since $(-1)^n=-1$, the factor multiplying $\bigl(1+O(n^{-1/2})\bigr)$ is
\[
  (-1)^{(n-1)/2}\frac{e^{c\sqrt n}}{\sqrt n}A_1F_+(c\sqrt n),
\]
and expanding again gives \eqref{eq:odd-asymp}.
\end{proof}

Throughout $m\in\mathbb{Z}$. To state our further tools we introduce the following notation.  
The zeros of $F_-$ are
\begin{equation}\label{eq:rminus}
  r_m:=\pi\left(m+\frac14\right),\qquad m\ge 0,
\end{equation}
while the zeros of $F_+$ are
\begin{equation}\label{eq:rplus}
  s_m:=\pi\left(m+\frac34\right),\qquad m\ge 0.
\end{equation}
Set
\begin{equation}\label{eq:alpha}
  \alpha:=\frac{\pi^2}{2|V|}=\frac{\pi^2}{c^2}.
\end{equation}
Then
\begin{equation}\label{eq:t-seqs}
  \frac{r_m^2}{c^2}=\alpha\left(m+\frac14\right)^2,
  \qquad
  \frac{s_m^2}{c^2}=\alpha\left(m+\frac34\right)^2.
\end{equation}

Our second key input guarantees that the previous two quadratic sequences stay a positive distance from the integers infinitely often.

\begin{theorem}\label{thm:good-subseq}

There exist $\delta_->0$, $\delta_+>0$, and infinite sets $\mathcal M_-,\mathcal M_+\subseteq \N_0$ such that
\begin{equation}\label{eq:good-distances}
  \dist\!\left(\alpha\left(m+\frac14\right)^2,\Z\right)\ge \delta_-
  \qquad (m\in \mathcal M_-),
\end{equation}
and
\begin{equation}\label{eq:good-distances-plus}
  \dist\!\left(\alpha\left(m+\frac34\right)^2,\Z\right)\ge \delta_+
  \qquad (m\in \mathcal M_+).
\end{equation}
\end{theorem}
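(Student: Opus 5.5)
We will prove Theorem~\ref{thm:good-subseq} by a purely elementary argument about quadratic polynomials modulo $1$, using only that $\alpha>0$ is a fixed real number. No irrationality information about $\alpha$ is needed. Write $f_-(m):=\alpha(m+\tfrac14)^2$ and $f_+(m):=\alpha(m+\tfrac34)^2$. The plan is a dichotomy on the sequence of values modulo $1$: either infinitely many of them stay away from $\Z$, which is what we want, or eventually all of them are very close to integers, and we will derive a contradiction from second differences.

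Suppose, for contradiction, that the statement for $f_-$ fails. Then for every $\delta>0$ only finitely many $m$ satisfy $\dist(f_-(m),\Z)\ge\delta$, so $\dist(f_-(m),\Z)\to 0$ as $m\to\infty$. Consider the second difference
\[
  \Delta^2 f_-(m)=f_-(m+2)-2f_-(m+1)+f_-(m)=2\alpha,
\]
which is constant. Since $\dist(\cdot,\Z)$ is subadditive and even, we get
\[
  \dist(2\alpha,\Z)\le \dist(f_-(m+2),\Z)+2\dist(f_-(m+1),\Z)+\dist(f_-(m),\Z)\to 0 .
\]
Hence $2\alpha\in\Z$.

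Next we use the first difference,
\[
  f_-(m+1)-f_-(m)=\alpha(2m+\tfrac32)=2\alpha m+\tfrac32\alpha .
\]
Since $2\alpha m\in\Z$, this forces $\dist(\tfrac32\alpha,\Z)=0$. Finally, $f_-(0)=\alpha/16$, and $f_-(m)\equiv f_-(0)+\sum_{j<m}(2\alpha j+\tfrac32\alpha)\equiv \alpha/16 \pmod 1$. Thus $\dist(f_-(m),\Z)=\dist(\alpha/16,\Z)$ is constant, and since it tends to $0$, we get $\alpha/16\in\Z$.

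So the failure of \eqref{eq:good-distances} forces $\alpha\in 16\Z$. The same computation for $f_+$ gives the conditions $2\alpha\in\Z$, $\tfrac52\alpha\in\Z$ and $9\alpha/16\in\Z$, which again force $\alpha\in16\Z$, since $\gcd(9,16)=1$. Conversely, if $\alpha\notin16\Z$, the argument above shows that for the relevant $f_\pm$ the sequence $\dist(f_\pm(m),\Z)$ does not tend to $0$. Then there is some $\delta_\pm>0$ with infinitely many $m$ satisfying $\dist(f_\pm(m),\Z)\ge\delta_\pm$, and we take $\mathcal M_\pm$ to be that infinite set.

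It remains to check the numerical input $\alpha\notin16\Z$; this is the only genuinely external step. We have $\alpha=\pi^2/(2|V|)$ with $|V|=D(e(1/6))/8$, and $D(e(1/6))$ is the maximal value of the Bloch--Wigner dilogarithm, approximately $1.01494$. Hence $|V|\approx0.12687$, $c^2=2|V|\approx0.2537$, and $\alpha\approx 38.9$. In particular $\alpha$ lies strictly between $32$ and $48$, so it is not a multiple of $16$. Only this coarse two-sided bound is needed, and it is easy to certify from the known value of $D(e(1/6))$, for instance via the Clausen-function series. This completes the plan. I expect the main care to be needed in the bookkeeping of the first-difference congruences and in pinning down the numerical constant; no equidistribution result is required.
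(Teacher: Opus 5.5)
Your proof is correct, and it takes a genuinely different route from the paper.

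The paper splits on the arithmetic nature of $\alpha$. For irrational $\alpha$ it invokes Weyl's equidistribution theorem to get infinitely many $m$ with distance at least $1/4$. For rational $\alpha=a/b$ it shows that $P_\pm(m)$ is periodic modulo $1$ with period $4b$. It then takes $\mathcal M_\pm$ to be the union of the residue classes on which $P_\pm$ is non-integral, and lets $\delta_\pm$ be the minimum of finitely many positive distances. The arithmetic input there is $\alpha\notin\Z$, certified by $38.8959<\alpha<38.9049$.

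You instead use the dichotomy ``some $\delta>0$ works for infinitely many $m$, or $\dist(f_\pm(m),\Z)\to 0$,'' and rule out the second alternative by finite differences. The constant second difference gives $2\alpha\in\Z$. The first differences then give $f_\pm(m)\equiv f_\pm(0)\pmod 1$, and so $\alpha\in 16\Z$.

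What your route buys:
\begin{itemize}
\item It removes both the rational/irrational case split and any appeal to equidistribution.
\item It needs only $32<\alpha<48$, which is implied by $0.83<G<1.23$. That bound follows at once from the positivity of the blocks $B_m$ together with the tail estimate in the proof of Lemma~\ref{lem:alpha-not-Z} taken with $M=1$; these give $0.99<G<1.03$.
\end{itemize}
What the paper's route buys in return is more structured output: an explicit $\delta=1/4$ in the irrational case, and sets $\mathcal M_\pm$ of positive density in both cases. However, only the infinitude of $\mathcal M_\pm$ is used later.

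One small bookkeeping point concerns $f_+$. Your appeal to $\gcd(9,16)=1$ presupposes $\alpha\in\Z$, but at that stage you only know $2\alpha\in\Z$. Either first note that $\tfrac12\alpha=\tfrac52\alpha-2\alpha\in\Z$, or use $\gcd(9,32)=1$ applied to $9\cdot\tfrac{\alpha}{16}$ and $32\cdot\tfrac{\alpha}{16}$, both of which you know are integers.
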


Our third key argument is about the existence of infinitely many same-sign triples and local minima by following the zeros of $F_-$ and the zeros of $F_+$.
This is confirmed by the following two results.

\begin{theorem}\label{thm:minus-family}
Let $m\in \mathcal M_-$, and set
\[
  N_m:=2\left\lfloor \frac{\alpha(m+1/4)^2}{2}\right\rfloor.
\]
Then $N_m$ is the unique even integer satisfying
\begin{equation}\label{eq:Nm-def}
  N_m<\alpha\left(m+\frac14\right)^2<N_m+2.
\end{equation}
For all sufficiently large $m\in \mathcal M_-$,
\[
  V_1(N_m),\qquad V_1(N_m+1),\qquad V_1(N_m+2)
\]
have the same sign. Moreover, both $N_m$ and $N_m+2$ are strict local minima of the sequence $|V_1(n)|$.
\end{theorem}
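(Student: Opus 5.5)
The plan is to linearize the asymptotics of Corollary~\ref{cor:parity} around the zero $r_m$ of $F_-$. Put $t_m:=\alpha(m+\tfrac14)^2$, so that $c\sqrt{t_m}=r_m$ by \eqref{eq:t-seqs}.

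\textbf{The even integer $N_m$.} For $m\in\mathcal M_-$, \eqref{eq:good-distances} gives $\dist(t_m,\Z)\ge\delta_-$. In particular $t_m$ is not an integer. Hence $N_m=2\lfloor t_m/2\rfloor$ is the unique even integer with $N_m<t_m<N_m+2$. Moreover, for every $n\in\{N_m-1,\dots,N_m+3\}$ we have
\[
\delta_-\le |n-t_m|\le 3 .
\]

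\textbf{Local expansion of $c\sqrt n$.} For such $n$,
\[
c\sqrt n-r_m=\frac{c(n-t_m)}{\sqrt n+\sqrt{t_m}}=\frac{c(n-t_m)}{2\sqrt{t_m}}\bigl(1+O(m^{-2})\bigr),
\]
which is $\asymp (n-t_m)/m$, since $\sqrt{t_m}\asymp m$. We use the Taylor expansions of $F_\pm$ at $r_m$. Since $F_-(r_m)=0$ and $F_-'=-F_+$ with $F_+(r_m)=\cos(\pi m+\tfrac\pi4)+\sin(\pi m+\tfrac\pi4)=(-1)^m\sqrt2$, we get
\[
F_-(c\sqrt n)=-(-1)^m\sqrt2\,(c\sqrt n-r_m)+O(m^{-2}),
\qquad
F_+(c\sqrt n)=(-1)^m\sqrt2+O(m^{-1}).
\]
Consequently, for $n=N_m$ and $n=N_m+2$ we have $|F_-(c\sqrt n)|\ge\kappa\,\delta_-/m$ for some absolute $\kappa>0$ and all large $m$. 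The sign of $F_-(c\sqrt n)$ is $(-1)^m$ at $n=N_m$, where $n<t_m$, and $-(-1)^m$ at $n=N_m+2$.

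\textbf{Same sign.} Insert these into \eqref{eq:even-asymp}. At $n=N_m$ the main term has size $\asymp e^{c\sqrt n}\delta_-/n$. The relative error term is smaller by a factor $O(n^{-1/2})$, and the term $O(n^{-1/2}e^{c\sqrt n/2})$ is exponentially smaller. So $\sgn V_1(N_m)=(-1)^{N_m/2}(-1)^m$. At $N_m+2$ both $(-1)^{n/2}$ and the sign of $F_-$ flip, so $\sgn V_1(N_m+2)=(-1)^{N_m/2}(-1)^m$ as well. For the odd $n=N_m+1$, \eqref{eq:odd-asymp} with $A_1>0$ gives $\sgn V_1(N_m+1)=(-1)^{N_m/2}(-1)^m$, with $|V_1(N_m+1)|\asymp e^{c\sqrt n}/\sqrt n$. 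Hence the three values share one sign.

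\textbf{Strict local minima.} The same estimate at the odd neighbours $N_m-1$ and $N_m+3$ gives $|V_1|\asymp e^{c\sqrt n}/\sqrt n$. Since $e^{c\sqrt{n\pm1}}/e^{c\sqrt n}\to1$, we get $|V_1(N_m)|$ and $|V_1(N_m+2)|$ of order $e^{c\sqrt n}/n$, while their odd neighbours are of the larger order $e^{c\sqrt n}/\sqrt n$. Therefore both $N_m$ and $N_m+2$ are strict local minima for large $m\in\mathcal M_-$.

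\textbf{Main obstacle.} The delicate step is the uniform lower bound $|F_-(c\sqrt n)|\gg\delta_-/m$ at the even points. This is exactly where Theorem~\ref{thm:good-subseq} is needed: without it, $t_m$ could approach an even integer and the main term would be swamped by the error terms. I would handle it by writing explicit Taylor bounds with constants depending only on $c$ and $\alpha$. I would then check that $e^{c\sqrt n}|F_-|/n$ and $n^{-1/2}e^{c\sqrt n/2}$ are both $o(e^{c\sqrt n}\delta_-/n)$, which holds because $\delta_-$ is fixed.
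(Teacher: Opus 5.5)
Your proposal is correct and follows essentially the same route as the paper. You place $N_m<t_m<N_m+2$ using $\dist(t_m,\Z)\ge\delta_-$, show $|c\sqrt{N_m+j}-r_m|\asymp N_m^{-1/2}$ with the lower bound at the even points coming from $\delta_-$, read off the signs and sizes of $F_\mp$ near the zero $r_m$, and compare main and error terms in Corollary~\ref{cor:parity}, exactly as the paper does. The only difference is cosmetic: you linearize $F_-$ by a Taylor expansion at $r_m$, while the paper uses the exact identities $F_-(r_m+h)=-\sqrt2(-1)^m\sin h$ and $F_+(r_m+h)=\sqrt2(-1)^m\cos h$, which spares it from tracking the $O(m^{-2})$ remainder.
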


\begin{theorem}\label{thm:plus-family}
Let $m\in \mathcal M_+$, and set
\[
  M_m:=2\left\lfloor \frac{\alpha(m+3/4)^2-1}{2}\right\rfloor+1.
\]
Then $M_m$ is the unique odd integer satisfying
\[
  M_m<\alpha\left(m+\frac34\right)^2<M_m+2.
\]
For all sufficiently large $m\in \mathcal M_+$, the three numbers
\[
  V_1(M_m),\qquad V_1(M_m+1),\qquad V_1(M_m+2)
\]
have the same sign. Moreover, both $M_m$ and $M_m+2$ are strict local minima of the sequence $|V_1(n)|$.
\end{theorem}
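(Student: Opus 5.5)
The plan is to mirror the argument for Theorem~\ref{thm:minus-family}, with the roles of even and odd $n$ swapped: now the zero $s_m$ of $F_+$ governs the odd indices, while $F_-$ stays bounded away from zero at the even ones. Put $t_m:=\alpha(m+3/4)^2$, so that $c\sqrt{t_m}=s_m$ by \eqref{eq:t-seqs}.

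\emph{Uniqueness of $M_m$.} For $m\in\mathcal M_+$, \eqref{eq:good-distances-plus} shows that $t_m$ is not an integer. Writing $t_m-1=2k+\theta$ with $k=\lfloor (t_m-1)/2\rfloor$ and $\theta\in(0,2)$ gives $M_m=2k+1<t_m<2k+3=M_m+2$. Uniqueness is clear, since consecutive odd integers are $2$ apart.

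\emph{Local behaviour near $s_m$.} Every $n\in\{M_m-1,M_m,M_m+1,M_m+2,M_m+3\}$ satisfies $|n-t_m|\le 3$. Since $t_m\asymp m^2$, this gives
\[
c\sqrt n-s_m=\frac{c(n-t_m)}{\sqrt n+\sqrt{t_m}}=\frac{c(n-t_m)}{2\sqrt{t_m}}\bigl(1+O(m^{-2})\bigr)=O(m^{-1}).
\]
Write $F_+(x)=\sqrt2\sin(x+\pi/4)$ and $F_-(x)=\sqrt2\cos(x+\pi/4)$, and note $s_m+\pi/4=\pi(m+1)$. Taylor expansion then yields
\[
F_+(c\sqrt n)=\sqrt2(-1)^{m+1}(c\sqrt n-s_m)\bigl(1+O(m^{-2})\bigr),\qquad F_-(c\sqrt n)=\sqrt2(-1)^{m+1}+O(m^{-2}).
\]
Here the good-distance property is essential. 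It guarantees $|n-t_m|\ge\delta_+$ for the odd $n=M_m,M_m+2$, so $|F_+(c\sqrt n)|\gg\delta_+/\sqrt n$, and $n-t_m$ has a definite sign: negative for $M_m$, positive for $M_m+2$. Substituting into Corollary~\ref{cor:parity} gives the following.
\begin{itemize}
\item For odd $n\in\{M_m,M_m+2\}$, the main term has size $\asymp e^{c\sqrt n}/n$. The first error term is smaller by a factor $O(n^{-1/2})$, and the second, $O(n^{-1/2}e^{c\sqrt n/2})$, is exponentially smaller. Hence
\[
\sgn V_1(n)=(-1)^{(n-1)/2}(-1)^{m+1}\sgn(n-t_m).
\]
\item For even $n\in\{M_m-1,M_m+1,M_m+3\}$, we get $\sgn V_1(n)=(-1)^{n/2}(-1)^{m+1}$ and $|V_1(n)|\asymp e^{c\sqrt n}/\sqrt n$.
\end{itemize}

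\emph{Signs and minima.} Checking parities, $V_1(M_m)$ has sign $(-1)^{(M_m-1)/2}(-1)^{m+1}(-1)=(-1)^{(M_m+1)/2}(-1)^{m+1}$. The quantity $V_1(M_m+2)$ has sign $(-1)^{(M_m+1)/2}(-1)^{m+1}(+1)$, and $V_1(M_m+1)$ has sign $(-1)^{(M_m+1)/2}(-1)^{m+1}$, so all three agree.

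For the minima, use that $e^{c\sqrt{n\pm1}}/e^{c\sqrt n}=1+O(n^{-1/2})$. Then $|V_1(M_m)|\asymp e^{c\sqrt{M_m}}/M_m$, while its neighbours $M_m\pm1$ have $|V_1|\asymp e^{c\sqrt{M_m}}/\sqrt{M_m}$, which is larger by a factor $\gg\sqrt{M_m}$. So $M_m$ is a strict local minimum for large $m$. The same comparison with $M_m+1$ and $M_m+3$ shows that $M_m+2$ is one too.

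The main obstacle is bookkeeping the implied constants uniformly. The lower bound $|F_+(c\sqrt n)|\ge c\delta_+/(3\sqrt n)$ must dominate both error terms of \eqref{eq:odd-asymp} for all sufficiently large $m\in\mathcal M_+$. The multiplicative error is harmless, but the additive $O(n^{-1/2}e^{c\sqrt n/2})$ term must be shown to be $o(\delta_+e^{c\sqrt n}/n)$. This holds because $\delta_+$ is fixed, and it is precisely where Theorem~\ref{thm:good-subseq} is needed.
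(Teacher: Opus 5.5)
Your proposal is correct and follows essentially the same route as the paper. You bracket $\alpha(m+3/4)^2$ strictly between the odd integers $M_m$ and $M_m+2$ and use the uniform gap $\delta_+$ to get $|F_+(c\sqrt n)|\asymp n^{-1/2}$ with opposite signs at the two odd endpoints, while $F_-$ stays near $\sqrt2(-1)^{m+1}$ at the even neighbours; you then read off the common sign and the size gap $e^{c\sqrt n}/n$ versus $e^{c\sqrt n}/\sqrt n$ from Corollary~\ref{cor:parity}, and your use of Taylor expansions with $1+O(m^{-2})$ factors and of $\sgn(n-t_m)$ in place of the paper's explicit trigonometric bounds is only a cosmetic difference.
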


The remainder of the paper is organized as follows. In Section~\ref{sec:proof-good-subseq} we prove Theorem~\ref{thm:good-subseq}, showing that the quadratic sequences associated with the zeros of $F_-$ and $F_+$ stay a positive distance from the integers along infinite subsequences. Section~\ref{sec:lemmas} collects the auxiliary lemmas needed for the proof of Theorem~\ref{thm:minus-family}. In Sections~\ref{sec:proof-minus-family} and~\ref{sec:proof-plus-family} we prove Theorems~\ref{thm:minus-family} and~\ref{thm:plus-family}, respectively, establishing the same-sign triples and the corresponding local minima. Finally, in Section~\ref{sec:proof-thm-main} we record the initial values listed by Andrews and combine the two families to prove Theorem~\ref{thm:main}.


\section{Proof of Theorem~\ref{thm:good-subseq}}\label{sec:proof-good-subseq}

We begin by showing that $\alpha$ is not an integer.
\begin{lemma}\label{lem:alpha-not-Z}
We have $\alpha\notin \Z$.
\end{lemma}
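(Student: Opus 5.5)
The plan is to reduce the claim to a rigorous numerical enclosure of the single constant $D(e(1/6))$, and then show that $\alpha$ lies strictly between two consecutive integers, namely $38<\alpha<39$.

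First, unwind the definitions. Since $|e(1/6)|=1$, we have $\log|z|=0$, so
\[
D(e(1/6))=\Im \operatorname{Li}_2(e^{i\pi/3})=\sum_{n\ge1}\frac{\sin(n\pi/3)}{n^2},
\]
which is the Clausen value $\mathrm{Cl}_2(\pi/3)$. This series converges absolutely, so nothing needs to be said about evaluating $\operatorname{Li}_2$ on the unit circle beyond Abel's theorem. Because $V=D(e(1/6))i/8$, we get $|V|=D(e(1/6))/8$ (the value is positive; see below). Hence
\[
\alpha=\frac{\pi^2}{2|V|}=\frac{4\pi^2}{D(e(1/6))}.
\]
So $38<\alpha<39$ is equivalent to
\[
\frac{4\pi^2}{39}<D(e(1/6))<\frac{4\pi^2}{38}.
\]
Numerically the left endpoint is about $1.0123$ and the right endpoint about $1.0389$. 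The expected value is $D(e(1/6))\approx1.01494$, which gives $\alpha\approx 38.90$.

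Second, obtain a rigorous enclosure of $D(e(1/6))$. The sequence $\sin(n\pi/3)$ has period $6$ with values $\tfrac{\sqrt3}{2}(1,1,0,-1,-1,0)$. Therefore
\[
D(e(1/6))=\frac{\sqrt3}{2}\sum_{k\ge0}b_k,\qquad
b_k:=\frac1{(6k+1)^2}+\frac1{(6k+2)^2}-\frac1{(6k+4)^2}-\frac1{(6k+5)^2}.
\]
Each $b_k$ is positive, since each positive term dominates the corresponding negative one. This shows $D>0$, so the partial sums $\frac{\sqrt3}{2}\sum_{k<K}b_k$ are lower bounds. For the upper bound, use the elementary estimate $1/a^2-1/(a+3)^2\le 6/a^3$. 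This gives
\[
b_k\le \frac{12}{(6k+1)^3},
\]
and the tail $\sum_{k\ge K}b_k$ is then at most an explicit integral-comparison quantity of size $O(1/K^2)$. The target window has width about $0.026$ and the true value sits about $0.0027$ above the lower endpoint. Taking $K$ around $5$ to $10$, with the partial sum computed by hand or with interval arithmetic, together with the tail bound and standard rational bounds for $\pi$ and $\sqrt3$, should place $D$ strictly inside the window.

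The main obstacle is the lower bound. The margin there is only about $0.0027$, so the partial sum must be carried far enough, and the rounding of $\pi^2$ and $\sqrt3$ must be handled carefully, to beat $4\pi^2/39$ rigorously. I would first try the positivity of the $b_k$ together with a few explicit terms. If that is too tight, I would sharpen it by adding the lower tail estimate $b_k\ge c/(6k+5)^3$ for a suitable explicit $c$. Once the enclosure is established, $38<\alpha<39$ follows, and hence $\alpha\notin\Z$.
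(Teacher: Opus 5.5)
Your proposal is correct and follows essentially the same route as the paper. Both use the same period-$6$ grouping into positive blocks $b_k$, the same bound $b_k\le 12/(6k+1)^3$ with an integral tail estimate, and an explicit partial sum; the paper takes eleven blocks and gets $38.8959<\alpha<38.9049$, which is stronger than your target $38<\alpha<39$. Your worry about the lower bound is unnecessary: already $\frac{\sqrt3}{2}\sum_{k\le 4}b_k\approx 1.01399>4\pi^2/39\approx 1.01227$, and the tail bound $\frac{\sqrt3}{2}\cdot 25^{-2}\approx 0.0014$ keeps the upper bound near $1.0154$, far below $4\pi^2/38\approx 1.0389$.
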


\begin{proof}

By \cite[Theorem~1.2(2) and Remark~(1)]{FMRS},
\[
  |V|=\frac{G}{8},
  \qquad
  G:=D(e(1/6))=D(e^{i\pi/3}).
\]
For $0<r<1$ and $0<\theta<2\pi$, the defining series for $\operatorname{Li}_2$ gives
\[
  \operatorname{Li}_2(re^{i\theta})=\sum_{n\ge 1}\frac{r^n e^{in\theta}}{n^2}.
\]
Since $\sum_{n\ge 1}n^{-2}$ converges, the series also converges absolutely at $r=1$; by Abel's theorem, for $0<\theta<2\pi$ we therefore have
\[
  \operatorname{Li}_2(e^{i\theta})=\sum_{n\ge 1}\frac{e^{in\theta}}{n^2}.
\]
Moreover $|e^{i\theta}|=1$, so $\log|e^{i\theta}|=0$. Hence
\[
  D(e^{i\theta})=\Im\bigl(\operatorname{Li}_2(e^{i\theta})\bigr)
  =\sum_{n\ge 1}\frac{\sin(n\theta)}{n^2}
  \qquad (0<\theta<2\pi).
\]
In particular,
\begin{align*}
  G&=\sum_{n\ge 1}\frac{\sin(n\pi/3)}{n^2} \\
   &=\frac{\sqrt 3}{2}\sum_{m\ge 0}
    \left(
      \frac{1}{(6m+1)^2}+\frac{1}{(6m+2)^2}-\frac{1}{(6m+4)^2}-\frac{1}{(6m+5)^2}
    \right) \\
   &= \frac{\sqrt 3}{2}\sum_{m\ge 0} B_m,
\end{align*}
where
\[
  B_m:=\frac{1}{(6m+1)^2}+\frac{1}{(6m+2)^2}-\frac{1}{(6m+4)^2}-\frac{1}{(6m+5)^2}.
\]
Each $B_m$ is positive. Indeed, the function $x\mapsto x^{-2}$ is strictly decreasing on $(0,\infty)$, so
\[
  \frac{1}{(6m+1)^2}>\frac{1}{(6m+4)^2},
  \qquad
  \frac{1}{(6m+2)^2}>\frac{1}{(6m+5)^2}.
\]
Moreover, since $x\mapsto x^{-2}$ has derivative $-2x^{-3}$ on $(0,\infty)$, the mean value theorem gives
\[
  0<
  \frac{1}{(6m+1)^2}-\frac{1}{(6m+4)^2}
  \le \frac{6}{(6m+1)^3},
\]
and
\[
  0<
  \frac{1}{(6m+2)^2}-\frac{1}{(6m+5)^2}
  \le \frac{6}{(6m+2)^3}
  \le \frac{6}{(6m+1)^3}.
\]
Hence
\[
  0<B_m\le \frac{12}{(6m+1)^3}.
\]
Therefore, for every integer $M\ge 0$,
\[
  0<\sum_{m\ge M+1}B_m
   \le 12\sum_{m\ge M+1}\frac{1}{(6m+1)^3}
   \le 12\int_M^{\infty}\frac{dx}{(6x+1)^3}
   =\frac{1}{(6M+1)^2}.
\]
Taking $M=10$, we obtain
\[
  S_{10}:=\frac{\sqrt 3}{2}\sum_{m=0}^{10}B_m =1.0147430670367583\ldots
\]
and therefore
\[
  0<G-S_{10}
   =\frac{\sqrt 3}{2}\sum_{m\ge 11}B_m
   \le \frac{\sqrt 3}{2}\cdot \frac{1}{61^2}
   =0.0002327399633927\ldots.
\]
Thus
\[
  1.0147430670< G <1.0149758071.
\]
Since $\alpha=\pi^2/(2|V|)=4\pi^2/G$, this gives
\[
  38.8959<\alpha<38.9049.
\]
In particular, $\alpha$ is not an integer.
\end{proof}

We are now ready to prove Theorem~\ref{thm:good-subseq}.

\begin{proof}[Proof of Theorem~\ref{thm:good-subseq}]
Set
\[
  P_-(m):=\alpha\left(m+\frac14\right)^2,
  \qquad
  P_+(m):=\alpha\left(m+\frac34\right)^2.
\]
We first consider the case that $\alpha$ is irrational. Then both $P_-(m)$ and $P_+(m)$ are quadratic polynomials with irrational leading coefficient. By Weyl's equidistribution theorem (see, for example, \cite[Chapter~1]{KuipersNiederreiter}), each of the sequences $P_-(m)$ and $P_+(m)$ is equidistributed modulo $1$. Hence infinitely many $m$ satisfy
\[
  \dist(P_-(m),\Z)\ge \frac14,
  \qquad
  \dist(P_+(m),\Z)\ge \frac14.
\]
Therefore \eqref{eq:good-distances} and \eqref{eq:good-distances-plus} both hold with $\delta_-=\delta_+=1/4$ and suitable infinite sets $\mathcal M_-$ and $\mathcal M_+$.

Now assume that $\alpha\in \mathbb{Q}$. Write
\[
  \alpha=\frac{a}{b}
\]
in lowest terms, with $a\in \Z$ and $b\in \N$. We show first that both $P_-(m)$ and $P_+(m)$ are periodic modulo $1$ with period $4b$.

For $P_-(m)$ we compute
\[
  P_-(m+4b)-P_-(m)
  =\alpha\left(\left(m+4b+\frac14\right)^2-\left(m+\frac14\right)^2\right).
\]
Using $(x+h)^2-x^2=h(2x+h)$ with $x=m+1/4$ and $h=4b$, we get
\[
  P_-(m+4b)-P_-(m)
  =\alpha\cdot 4b\left(2m+4b+\frac12\right)
  =8am+16ab+2a\in \Z.
\]
So
\[
  P_-(m+4b)\equiv P_-(m)\pmod 1
\]
for every $m$. In the same way,
\[
  P_+(m+4b)-P_+(m)
  =\alpha\cdot 4b\left(2m+4b+\frac32\right)
  =8am+16ab+6a\in \Z,
\]
so
\[
  P_+(m+4b)\equiv P_+(m)\pmod 1
\]
for every $m$. Therefore the fractional part of $P_{\pm}(m)$ depends only on the residue class of $m$ modulo $4b$.

We now construct $\delta_-$ and $\mathcal M_-$. Suppose first that $P_-(m)\in \Z$ for every $m$. Then in particular
\[
  P_-(0)=\frac{\alpha}{16}\in \Z,
\]
so $\alpha=16P_-(0)\in \Z$. This contradicts Lemma~\ref{lem:alpha-not-Z}. Hence there is at least one residue class $r$ modulo $4b$ for which $P_-(r)\notin \Z$.

Now look at the finite set
\[
  \left\{\dist(P_-(r),\Z):0\le r<4b,\ P_-(r)\notin \Z\right\}.
\]
It is finite and nonempty, so it has a smallest element. Define
\[
  \delta_-:=\min\left\{\dist(P_-(r),\Z):0\le r<4b,\ P_-(r)\notin \Z\right\}>0.
\]
Also define
\[
  \mathcal M_-:=\left\{r+4bt:0\le r<4b,\ P_-(r)\notin \Z,\ t\in \N_0\right\}.
\]
Each residue class modulo $4b$ contains infinitely many integers, so $\mathcal M_-$ is infinite. If $m=r+4bt\in \mathcal M_-$, then
\[
  P_-(m)\equiv P_-(r)\pmod 1.
\]
Therefore
\[
  \dist(P_-(m),\Z)=\dist(P_-(r),\Z)\ge \delta_-.
\]
This proves \eqref{eq:good-distances}.

The construction of $\delta_+$ and $\mathcal M_+$ is the same. We only need to check that not all values of $P_+(m)$ are integers. Suppose that $P_+(m)\in \Z$ for every $m$. Then
\[
  P_+(0)=\frac{9\alpha}{16}\in \Z
\]
and
\[
  P_+(1)-P_+(0)=\frac{5\alpha}{2}\in \Z.
\]
The first relation gives $9\alpha\in \Z$, and the second gives $10\alpha\in \Z$. Hence
\[
  \alpha=10\alpha-9\alpha\in \Z.
\]
This contradicts Lemma~\ref{lem:alpha-not-Z}. So there is at least one residue class $s$ modulo $4b$ for which $P_+(s)\notin \Z$.

Now define
\[
  \delta_+:=\min\left\{\dist(P_+(s),\Z):0\le s<4b,\ P_+(s)\notin \Z\right\}>0
\]
and
\[
  \mathcal M_+:=\left\{s+4bt:0\le s<4b,\ P_+(s)\notin \Z,\ t\in \N_0\right\}.
\]
Exactly as above, $\mathcal M_+$ is infinite and
\[
  \dist(P_+(m),\Z)\ge \delta_+
  \qquad (m\in \mathcal M_+).
\]
This proves \eqref{eq:good-distances-plus} and completes the proof.
\end{proof}

\section{Auxiliary lemmas}\label{sec:lemmas}

In this section we prove four elementary lemmas that isolate the points used in the proof of Theorem~\ref{thm:minus-family}.

For convenience, for $m\in \mathcal M_-$, set
\[
  t_m:=\alpha\left(m+\frac14\right)^2=\frac{r_m^2}{c^2},
  \qquad
  N_m:=2\left\lfloor \frac{t_m}{2}\right\rfloor,
\]
and
\[
  x_{m,j}:=c\sqrt{N_m+j}
  \qquad (j\in\{-1,0,1,2,3\}).
\]

\begin{lemma}\label{lem:surround-even}
Let $t\in \R\setminus\Z$, and set $N:=2\lfloor t/2\rfloor$. Then $N$ is the unique even integer satisfying
\[
  N<t<N+2.
\]
Moreover, exactly one of the two even integers $N$ and $N+2$ is nearer to $t$.
\end{lemma}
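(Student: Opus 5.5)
The plan is to argue directly from the definition of the floor function. Write $k:=\lfloor t/2\rfloor$, so that $k\le t/2<k+1$, and multiply by $2$ to get $2k\le t<2k+2$, that is, $N\le t<N+2$. Since $t\notin\Z$ while $N\in\Z$, the left inequality is strict, which gives $N<t<N+2$.

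For uniqueness, suppose $N'$ is another even integer with $N'<t<N'+2$. The two open intervals $(N,N+2)$ and $(N',N'+2)$ both contain $t$, so $|N-N'|<2$. Since $N-N'$ is even, this forces $N=N'$. Equivalently, $N'/2<t/2<N'/2+1$ forces $N'/2=\lfloor t/2\rfloor$.

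For the second claim, compare the distances $t-N$ and $N+2-t$. Both are positive, and they sum to $2$. They are equal exactly when $t=N+1$, which is impossible because $t\notin\Z$. So exactly one of them is strictly smaller than $1$, and the corresponding even integer is the strictly nearer one. For instance, $N$ is nearer precisely when $t<N+1$.

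There is no real obstacle. The only points needing care are using $t\notin\Z$ twice: once to make the inequality $N<t$ strict, and once to rule out the tie at $t=N+1$.
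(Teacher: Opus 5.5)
Your proposal is correct and follows essentially the same route as the paper: the floor inequality $2k\le t<2k+2$ made strict by $t\notin\Z$, uniqueness by showing any such even $E$ has $E/2=\lfloor t/2\rfloor$ (your $|N-N'|<2$ argument is an equivalent variant), and ruling out a tie because it would force $t=N+1\in\Z$.
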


\begin{proof}
From
\[
  \left\lfloor \frac{t}{2}\right\rfloor\le \frac{t}{2}<\left\lfloor \frac{t}{2}\right\rfloor+1
\]
we get $N\le t<N+2$. Since $t\notin\Z$ and $N$ is an integer, in fact $N<t<N+2$. If $E$ is any even integer with $E<t<E+2$, then
\[
  \frac{E}{2}<\frac{t}{2}<\frac{E}{2}+1,
\]
so $\lfloor t/2\rfloor=E/2$. Hence $E=N$, which proves uniqueness.

If $N$ and $N+2$ were equally close to $t$, then $t=N+1$, which is an integer. This is impossible. So one of $N$ and $N+2$ is uniquely nearest to $t$.
\end{proof}

\begin{lemma}\label{lem:minus-placement}
For every $m\in\mathcal M_-$ we have
\[
  N_m<t_m<N_m+2,
  \qquad
  x_{m,0}<r_m<x_{m,2},
\]
and
\[
  \delta_-\le t_m-N_m<2,
  \qquad
  \delta_-\le N_m+2-t_m<2.
\]
\end{lemma}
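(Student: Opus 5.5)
The plan is to derive all three claims from Lemma~\ref{lem:surround-even}, the defining property of $\mathcal M_-$ in Theorem~\ref{thm:good-subseq}, and monotonicity of the square root.

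First, for $m\in\mathcal M_-$, \eqref{eq:good-distances} gives $\dist(t_m,\Z)\ge\delta_->0$, so $t_m\notin\Z$. Lemma~\ref{lem:surround-even} applied with $t=t_m$ then shows that $N_m=2\lfloor t_m/2\rfloor$ is the unique even integer with $N_m<t_m<N_m+2$. This is the first assertion.

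Second, all quantities are nonnegative: $N_m\ge 0$ because $t_m>0$, and $c>0$. Since $x\mapsto c\sqrt x$ is strictly increasing on $[0,\infty)$, applying it to $N_m<t_m<N_m+2$ gives
\[
c\sqrt{N_m}<c\sqrt{t_m}<c\sqrt{N_m+2}.
\]
By \eqref{eq:t-seqs} and $r_m>0$ we have $c\sqrt{t_m}=r_m$, so $x_{m,0}<r_m<x_{m,2}$.

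Third, the upper bounds $t_m-N_m<2$ and $N_m+2-t_m<2$ are immediate from the strict inequalities $N_m<t_m<N_m+2$. For the lower bounds, note that $N_m$ and $N_m+2$ are integers, so
\[
t_m-N_m=|t_m-N_m|\ge\dist(t_m,\Z)\ge\delta_-,
\]
and similarly $N_m+2-t_m\ge\dist(t_m,\Z)\ge\delta_-$.

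There is no real obstacle here. The only points needing care are checking that every number is nonnegative, so the square root can be applied monotonically, and recalling that $r_m=\pi(m+1/4)>0$, so that $\sqrt{r_m^2}=r_m$.
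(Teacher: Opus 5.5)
Your proposal is correct and follows essentially the same route as the paper's proof. In both, $t_m\notin\Z$ is deduced from $\dist(t_m,\Z)\ge\delta_-$, and then Lemma~\ref{lem:surround-even}, strict monotonicity of $u\mapsto c\sqrt u$, and the fact that $t_m-N_m$ and $N_m+2-t_m$ are distances to integers give the three claims. Your explicit checks that $N_m\ge 0$ and $r_m>0$ are a small but welcome extra bit of care.
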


\begin{proof}
Because $m\in\mathcal M_-$, Theorem~\ref{thm:good-subseq} gives
\[
  \dist(t_m,\Z)\ge \delta_->0.
\]
In particular $t_m\notin\Z$. Lemma~\ref{lem:surround-even} therefore gives $N_m<t_m<N_m+2$.

Since
\[
  r_m=c\sqrt{t_m},
  \qquad
  x_{m,0}=c\sqrt{N_m},
  \qquad
  x_{m,2}=c\sqrt{N_m+2},
\]
and the function $u\mapsto c\sqrt{u}$ is strictly increasing on $(0,\infty)$, it follows that
\[
  x_{m,0}<r_m<x_{m,2}.
\]

The numbers $t_m-N_m$ and $N_m+2-t_m$ are distances from $t_m$ to integers, so each is at least $\dist(t_m,\Z)\ge \delta_-$. The upper bounds follow at once from $N_m<t_m<N_m+2$.
\end{proof}

\begin{lemma}\label{lem:minus-distances}
There exist constants $u_1,u_2,u_3>0$ such that for all sufficiently large $m\in\mathcal M_-$,
\[
  u_1N_m^{-1/2}\le r_m-x_{m,0}\le u_2N_m^{-1/2},
  \qquad
  u_1N_m^{-1/2}\le x_{m,2}-r_m\le u_2N_m^{-1/2},
\]
and
\[
  |x_{m,j}-r_m|\le u_3N_m^{-1/2}
  \qquad (j=-1,1,3).
\]
\end{lemma}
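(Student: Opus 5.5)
The plan is to convert differences of square roots into differences of their arguments using the identity
\[
  c\sqrt{a}-c\sqrt{b}=\frac{c\,(a-b)}{\sqrt a+\sqrt b}\qquad (a,b>0).
\]
Since $r_m=c\sqrt{t_m}$ and $x_{m,j}=c\sqrt{N_m+j}$, we have
\[
  r_m-x_{m,0}=\frac{c\,(t_m-N_m)}{\sqrt{t_m}+\sqrt{N_m}},\qquad
  x_{m,2}-r_m=\frac{c\,(N_m+2-t_m)}{\sqrt{N_m+2}+\sqrt{t_m}},
\]
and, for $j=-1,1,3$,
\[
  |x_{m,j}-r_m|=\frac{c\,|N_m+j-t_m|}{\sqrt{N_m+j}+\sqrt{t_m}}.
\]
The numerators are controlled by Lemma~\ref{lem:minus-placement}. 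It gives $\delta_-\le t_m-N_m<2$ and $\delta_-\le N_m+2-t_m<2$. Since $N_m<t_m<N_m+2$, it also gives $|N_m+j-t_m|<3$ for $j\in\{-1,1,3\}$. This last bound is the worst case, attained at $j=-1$ or $j=3$. Crucially, the lower bound $\delta_-$ is uniform in $m$, because it comes from Theorem~\ref{thm:good-subseq}.

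Next I bound the denominators by multiples of $\sqrt{N_m}$. First, $N_m\to\infty$ as $m\to\infty$ in $\mathcal M_-$, since $t_m=\alpha(m+1/4)^2\to\infty$ and $N_m>t_m-2$. So for $m$ large enough that $N_m\ge 2$, every quantity $N_m+j$ with $j\ge -1$ and $t_m$ lies in $[N_m/2,\,N_m+3]\subseteq[N_m/2,\,3N_m]$. Hence each denominator lies between $2\sqrt{N_m/2}=\sqrt2\,\sqrt{N_m}$ and $2\sqrt{3N_m}$.

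Combining the two steps, I take
\[
  u_1=\frac{c\,\delta_-}{2\sqrt3},\qquad u_2=\frac{2c}{\sqrt2}=\sqrt2\,c,\qquad u_3=\frac{3c}{\sqrt2}.
\]
These are positive constants since $c>0$.

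There is no serious obstacle. The only points needing care are that $\delta_-$ is independent of $m$, which is what makes $u_1$ a constant, and that "sufficiently large" is needed so that $N_m-1>0$, keeping $\sqrt{N_m-1}$ defined, and so that the comparability of the denominators with $\sqrt{N_m}$ holds.
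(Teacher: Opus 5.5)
Your proposal is correct and follows essentially the same route as the paper. Both rationalize the square-root differences, bound the numerators by Lemma~\ref{lem:minus-placement} (using the uniform $\delta_-$ from Theorem~\ref{thm:good-subseq}), and use $|N_m+j-t_m|<3$ for $j=-1,1,3$ together with denominators $\asymp\sqrt{N_m}$. The only difference is that you make the implied constants $u_1,u_2,u_3$ explicit, and your values for them check out.
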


\begin{proof}
By Lemma~\ref{lem:minus-placement}, we have $t_m=N_m+O(1)$. Since $t_m\to\infty$ with $m$, it follows that $N_m\to\infty$ as well.

Also,
\[
  r_m-x_{m,0}
  =\frac{c\,(t_m-N_m)}{\sqrt{N_m}+\sqrt{t_m}},
  \qquad
  x_{m,2}-r_m
  =\frac{c\,(N_m+2-t_m)}{\sqrt{N_m+2}+\sqrt{t_m}}.
\]
By Lemma~\ref{lem:minus-placement}, the numerators stay between $\delta_-$ and $2$, while the denominators are $\asymp \sqrt{N_m}$. This gives the first two bounds.

For $j=-1,1,3$, since $t_m\in(N_m,N_m+2)$, we have
$N_m-1-t_m\in(-3,-1)$, $N_m+1-t_m\in(-1,1)$, and $N_m+3-t_m\in(1,3)$;
hence $|N_m+j-t_m|<3$. Using
\[
  x_{m,j}-r_m
  =\frac{c\,(N_m+j-t_m)}{\sqrt{N_m+j}+\sqrt{t_m}},
\]
and again $\sqrt{N_m+j}+\sqrt{t_m}\asymp \sqrt{N_m}$, we obtain
\[
  |x_{m,j}-r_m|\ll N_m^{-1/2}.
\]
This is the required bound after renaming the constant.
\end{proof}

\begin{lemma}\label{lem:trig-near-r}
Let $u=r_m+h$. Then
\[
  F_-(u)=-\sqrt 2\,(-1)^m\sin h,
  \qquad
  F_+(u)=\sqrt 2\,(-1)^m\cos h.
\]
Consequently, whenever $|h|<1$,
\[
  \sgn F_-(u)=
  \begin{cases}
    (-1)^m,& h<0,\\
    -(-1)^m,& h>0,
  \end{cases}
  \qquad
  \frac{2\sqrt 2}{\pi}|h|\le |F_-(u)|\le \sqrt 2\,|h|,
\]
and
\[
  \sgn F_+(u)=(-1)^m,
  \qquad
  \sqrt 2\cos(1)\le |F_+(u)|\le \sqrt 2.
\]
\end{lemma}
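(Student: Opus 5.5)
The plan is to reduce both identities to the addition formulas for sine and cosine. We rewrite $F_\pm$ in amplitude–phase form:
\[
  F_-(x)=\cos x-\sin x=\sqrt2\cos\!\left(x+\frac{\pi}{4}\right),
  \qquad
  F_+(x)=\cos x+\sin x=\sqrt2\sin\!\left(x+\frac{\pi}{4}\right),
\]
which follows at once from $\cos(\pi/4)=\sin(\pi/4)=1/\sqrt2$. With $u=r_m+h=\pi m+\pi/4+h$ we get $u+\pi/4=\pi m+\pi/2+h$. Now use $\cos(\pi m+\theta)=(-1)^m\cos\theta$ and $\sin(\pi m+\theta)=(-1)^m\sin\theta$, together with $\cos(\pi/2+h)=-\sin h$ and $\sin(\pi/2+h)=\cos h$. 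This gives
\[
  F_-(u)=\sqrt2(-1)^m\cos(\pi/2+h)=-\sqrt2(-1)^m\sin h,
  \qquad
  F_+(u)=\sqrt2(-1)^m\cos h,
\]
which are the two stated identities.

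For the consequences we assume $|h|<1$. Since $1<\pi$, we have $0<|h|<\pi$ whenever $h\neq0$, so $\sgn\sin h=\sgn h$. Hence $\sgn F_-(u)=-(-1)^m\sgn h$, which is exactly the stated case split.

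For the two-sided bound on $|F_-(u)|$ we use two standard inequalities. The first is $|\sin h|\le|h|$. The second is Jordan's inequality $|\sin h|\ge \frac{2}{\pi}|h|$ for $|h|\le\pi/2$; this applies because $1<\pi/2$. Jordan's inequality itself follows from the concavity of $\sin$ on $[0,\pi/2]$, since the graph lies above the chord from $0$ to $\pi/2$. Multiplying both inequalities by $\sqrt2$ gives
\[
  \frac{2\sqrt2}{\pi}|h|\le|F_-(u)|\le\sqrt2|h|.
\]

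For $F_+$, note that $|h|<1<\pi/2$ gives $\cos h>0$, so $\sgn F_+(u)=(-1)^m$. Moreover $\cos$ is even and decreasing on $[0,1]$, so $\cos 1\le\cos h\le 1$. Multiplying by $\sqrt2$ yields $\sqrt2\cos 1\le|F_+(u)|\le\sqrt2$.

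There is no real obstacle in this argument. The only points that need care are the sign bookkeeping in the phase shift $\pi m+\pi/2$ and checking that $|h|<1$ lies inside the range $|h|\le \pi/2$ where Jordan's inequality and the positivity of cosine hold.
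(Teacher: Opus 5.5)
Your proposal is correct and matches the paper's proof: both rewrite $F_\pm$ in amplitude--phase form, shift by $\pi m+\pi/2$ to obtain the two identities, and then use $\frac{2}{\pi}|h|\le|\sin h|\le|h|$ and $\cos h\ge\cos 1>0$ for $|h|<1$. The only differences are cosmetic: the paper writes $F_+(x)=\sqrt2\cos(x-\pi/4)$ instead of $\sqrt2\sin(x+\pi/4)$, and you additionally justify Jordan's inequality via concavity.
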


\begin{proof}
Since $r_m=\pi(m+1/4)$, we have
\[
  F_-(r_m+h)=\sqrt 2\cos\left(r_m+h+\frac{\pi}{4}\right)
  =\sqrt 2\cos\left(\pi m+\frac{\pi}{2}+h\right)
  =-\sqrt 2\,(-1)^m\sin h,
\]
and
\[
  F_+(r_m+h)=\sqrt 2\cos\left(r_m+h-\frac{\pi}{4}\right)
  =\sqrt 2\cos(\pi m+h)
  =\sqrt 2\,(-1)^m\cos h.
\]
If $|h|<1$, then $\sin h$ has the same sign as $h$, and
\[
  \frac{2}{\pi}|h|\le |\sin h|\le |h|.
\]
This gives the sign and size bounds for $F_-$. Also $\cos h>0$ for $|h|<1$, so $\sgn F_+(u)=(-1)^m$, and
\[
  |F_+(u)|=\sqrt 2\,|\cos h|\ge \sqrt 2\cos(1).
\]
The upper bound $|F_+(u)|\le \sqrt 2$ is trivial.
\end{proof}

\section{Proof of Theorem~\ref{thm:minus-family}}\label{sec:proof-minus-family}

\begin{proof}
By Lemma~\ref{lem:minus-placement}, $N_m$ is the unique even integer satisfying \eqref{eq:Nm-def}. Since $N_m<t_m$, we also have $N_m\to\infty$ with $m$.

By Lemma~\ref{lem:minus-distances}, all the points $x_{m,j}$ satisfy $|x_{m,j}-r_m|\ll N_m^{-1/2}$. Hence, for all sufficiently large $m\in\mathcal M_-$, we are in the range $|x_{m,j}-r_m|<1$. Combining Lemmas~\ref{lem:minus-distances} and \ref{lem:trig-near-r}, we obtain constants $c_1,c_2,b_3>0$ such that
\begin{equation}\label{eq:endpoint-Fminus}
  \sgn F_-(x_{m,0})=(-1)^m,
  \qquad
  \sgn F_-(x_{m,2})=-(-1)^m,
\end{equation}
and
\begin{equation}\label{eq:endpoint-size}
  c_1N_m^{-1/2}\le |F_-(x_{m,j})|\le c_2N_m^{-1/2}
  \qquad (j=0,2),
\end{equation}
while
\begin{equation}\label{eq:middle-Fplus}
  \sgn F_+(x_{m,j})=(-1)^m,
  \qquad
  b_3\le |F_+(x_{m,j})|\le \sqrt 2
  \qquad (j=-1,1,3).
\end{equation}
To spell out the signs: Lemma~\ref{lem:minus-placement} gives $x_{m,0}<r_m<x_{m,2}$, so if we write $x_{m,j}=r_m+h_{m,j}$, then $h_{m,0}<0<h_{m,2}$. The endpoint signs in \eqref{eq:endpoint-Fminus} therefore come from the first part of Lemma~\ref{lem:trig-near-r}. For $j=-1,1,3$ we only use that $|h_{m,j}|=|x_{m,j}-r_m|<1$, and then the second part of Lemma~\ref{lem:trig-near-r} gives the sign statement in \eqref{eq:middle-Fplus}.

For convenience, let $T(n)$ denote the main term from Corollary~\ref{cor:parity}; thus
\[
  T(n):=
  \begin{cases}
    (-1)^{n/2}\dfrac{e^{c\sqrt n}}{\sqrt n}A_0F_-(c\sqrt n),& n \text{ even},\\[1.2ex]
    (-1)^{(n-1)/2}\dfrac{e^{c\sqrt n}}{\sqrt n}A_1F_+(c\sqrt n),& n \text{ odd}.
  \end{cases}
\]
Since $N_m$ is even, the parity factors in $T(N_m)$, $T(N_m+1)$, and $T(N_m+2)$ are $(-1)^{N_m/2}$, $(-1)^{(N_m+1-1)/2}=(-1)^{N_m/2}$, and $(-1)^{(N_m+2)/2}=(-1)^{N_m/2+1}$, respectively. Combining these with \eqref{eq:endpoint-Fminus} and \eqref{eq:middle-Fplus}, and using $A_0,A_1>0$, we obtain
\[
  \sgn T(N_m)=\sgn T(N_m+1)=\sgn T(N_m+2)=(-1)^{N_m/2+m}.
\]
Thus the three main terms have the same sign.

By \eqref{eq:endpoint-size}, \eqref{eq:middle-Fplus}, and the fact that
\[
  e^{c\sqrt{N_m+j}}=e^{c\sqrt{N_m}}\bigl(1+O(N_m^{-1/2})\bigr)
  \qquad (j=-1,0,1,2,3),
\]
we have
\begin{equation}\label{eq:main-sizes}
  |T(N_m)|\asymp \frac{e^{c\sqrt{N_m}}}{N_m},
  \qquad
  |T(N_m+1)|\asymp \frac{e^{c\sqrt{N_m}}}{\sqrt{N_m}},
  \qquad
  |T(N_m+2)|\asymp \frac{e^{c\sqrt{N_m}}}{N_m}.
\end{equation}
At the even endpoints, Corollary~\ref{cor:parity} and \eqref{eq:endpoint-size} give the error estimate
\[
  O\!\left(\frac{e^{c\sqrt{N_m}}|F_-(x_{m,j})|}{N_m}\right)
  +O\!\left(N_m^{-1/2}e^{c\sqrt{N_m}/2}\right)
  =O\!\left(\frac{e^{c\sqrt{N_m}}}{N_m^{3/2}}\right)
   +O\!\left(N_m^{-1/2}e^{c\sqrt{N_m}/2}\right)
\]
for $j=0,2$, which is $o(e^{c\sqrt{N_m}}/N_m)$. At the odd middle term, Corollary~\ref{cor:parity} and \eqref{eq:middle-Fplus} give
\[
  O\!\left(\frac{e^{c\sqrt{N_m}}|F_+(x_{m,1})|}{N_m}\right)
  +O\!\left(N_m^{-1/2}e^{c\sqrt{N_m}/2}\right)
  =O\!\left(\frac{e^{c\sqrt{N_m}}}{N_m}\right)
   +O\!\left(N_m^{-1/2}e^{c\sqrt{N_m}/2}\right),
\]
which is $o(e^{c\sqrt{N_m}}/\sqrt{N_m})$. Writing $V_1(N_m+j)=T(N_m+j)+R(N_m+j)$, the preceding estimates show that $|R(N_m+j)|=o(|T(N_m+j)|)$ for $j=0,1,2$. Hence $|R(N_m+j)|<\frac12|T(N_m+j)|$ for all sufficiently large $m\in\mathcal M_-$, so $\sgn V_1(N_m+j)=\sgn T(N_m+j)$ for $j=0,1,2$. Therefore the actual coefficients
\[
  V_1(N_m),\qquad V_1(N_m+1),\qquad V_1(N_m+2)
\]
have the same sign for all sufficiently large $m\in\mathcal M_-$. Exactly the same use of Corollary~\ref{cor:parity} with \eqref{eq:middle-Fplus} also shows that for $j\in\{-1,1,3\}$,
\[
  V_1(N_m+j)=T(N_m+j)+R(N_m+j),
  \qquad
  |R(N_m+j)|=o(|T(N_m+j)|),
\]
so in particular
\[
  |V_1(N_m+j)|\asymp \frac{e^{c\sqrt{N_m}}}{\sqrt{N_m}}
  \qquad (j=-1,1,3).
\]

Finally, \eqref{eq:endpoint-size} and Corollary~\ref{cor:parity} imply
\[
  |V_1(N_m)|\ll \frac{e^{c\sqrt{N_m}}}{N_m},
  \qquad
  |V_1(N_m+2)|\ll \frac{e^{c\sqrt{N_m}}}{N_m}.
\]
For the odd neighbors, the estimates obtained above give
\[
  |V_1(N_m+j)|\gg \frac{e^{c\sqrt{N_m}}}{\sqrt{N_m}}
  \qquad (j\in\{-1,1,3\}).
\]
Since $N_m^{-1}=o(N_m^{-1/2})$, it follows that for all sufficiently large $m\in\mathcal M_-$,
\[
  |V_1(N_m)|<\min\{|V_1(N_m-1)|,|V_1(N_m+1)|\}
\]
and
\[
  |V_1(N_m+2)|<\min\{|V_1(N_m+1)|,|V_1(N_m+3)|\}.
\]
Thus both $N_m$ and $N_m+2$ are strict local minima of the sequence $|V_1(n)|$.
\end{proof}

\section{Proof of Theorem~\ref{thm:plus-family}}\label{sec:proof-plus-family}

\begin{proof}
Set
\[
  u_m:=\alpha\left(m+\frac34\right)^2=\frac{s_m^2}{c^2},
  \qquad
  y_{m,j}:=c\sqrt{M_m+j}\qquad (j\in\{-1,0,1,2,3\}).
\]
Because $m\in\mathcal M_+$, Theorem~\ref{thm:good-subseq} gives $\dist(u_m,\Z)\ge \delta_+$, so $u_m\notin\Z$. Applying Lemma~\ref{lem:surround-even} to $u_m-1$ shows that $M_m$ is the unique odd integer with $M_m<u_m<M_m+2$. Since
\[
  s_m=c\sqrt{u_m},
  \qquad
  y_{m,0}=c\sqrt{M_m},
  \qquad
  y_{m,2}=c\sqrt{M_m+2},
\]
we have $y_{m,0}<s_m<y_{m,2}$, and therefore
\[
  \delta_+\le u_m-M_m<2,
  \qquad
  \delta_+\le M_m+2-u_m<2.
\]
Since $M_m<u_m<M_m+2$, we also have $M_m=u_m+O(1)$, hence $M_m\to\infty$ with $m$. Exactly as in Lemma~\ref{lem:minus-distances}, there exist constants $v_1,v_2,v_3>0$ such that for all sufficiently large $m\in\mathcal M_+$,
\begin{equation}\label{eq:plus-endpoint-distance}
  v_1M_m^{-1/2}\le s_m-y_{m,0}\le v_2M_m^{-1/2},
  \qquad
  v_1M_m^{-1/2}\le y_{m,2}-s_m\le v_2M_m^{-1/2}.
\end{equation}
and
\begin{equation}\label{eq:plus-middle-distance}
  |y_{m,j}-s_m|\le v_3M_m^{-1/2}
  \qquad (j=-1,1,3).
\end{equation}

Writing $u=s_m+h$, we have
\[
  F_+(u)=-\sqrt 2\,(-1)^m\sin h,
  \qquad
  F_-(u)=-\sqrt 2\,(-1)^m\cos h.
\]
Hence, whenever $|h|<1$,
\begin{equation}\label{eq:Fplus-near-s}
  \sgn F_+(u)=
  \begin{cases}
    (-1)^m,& u<s_m,\\
    -(-1)^m,& u>s_m,
  \end{cases}
\end{equation}
and
\begin{equation}\label{eq:Fplus-linear}
  \frac{2\sqrt 2}{\pi}|u-s_m|\le |F_+(u)|\le \sqrt 2\,|u-s_m|,
\end{equation}
while
\begin{equation}\label{eq:Fminus-near-s}
  \sgn F_-(u)=-(-1)^m,
  \qquad
  \sqrt 2\cos(1)\le |F_-(u)|\le \sqrt 2.
\end{equation}
Using \eqref{eq:plus-endpoint-distance} and \eqref{eq:plus-middle-distance}, we may assume all the points $y_{m,j}$ lie in this range. Hence, for all sufficiently large $m\in\mathcal M_+$,
\begin{equation}\label{eq:plus-endpoint-signs}
  \sgn F_+(y_{m,0})=(-1)^m,
  \qquad
  \sgn F_+(y_{m,2})=-(-1)^m,
\end{equation}
and
\begin{equation}\label{eq:plus-endpoint-sizes}
  \frac{2\sqrt 2}{\pi}v_1M_m^{-1/2}\le |F_+(y_{m,j})|\le \sqrt 2\,v_2M_m^{-1/2}
  \qquad (j=0,2),
\end{equation}
while \eqref{eq:plus-middle-distance} and \eqref{eq:Fminus-near-s} give
\begin{equation}\label{eq:plus-middle-signs}
  \sgn F_-(y_{m,j})=-(-1)^m,
  \qquad
  \sqrt 2\cos(1)\le |F_-(y_{m,j})|\le \sqrt 2
  \qquad (j=-1,1,3).
\end{equation}
Here $y_{m,0}<s_m<y_{m,2}$, so the endpoint signs in \eqref{eq:plus-endpoint-signs} come directly from \eqref{eq:Fplus-near-s}. For $j=-1,1,3$ we only use that $|y_{m,j}-s_m|<1$, and then \eqref{eq:Fminus-near-s} gives the sign statement in \eqref{eq:plus-middle-signs}.

Let $T(n)$ again denote the main term from Corollary~\ref{cor:parity}. Since $M_m$ is odd, the parity factors in $T(M_m)$, $T(M_m+1)$, and $T(M_m+2)$ are $(-1)^{(M_m-1)/2}$, $(-1)^{(M_m+1)/2}=(-1)^{(M_m-1)/2+1}$, and $(-1)^{(M_m+2-1)/2}=(-1)^{(M_m-1)/2+1}$, respectively. Combining these with \eqref{eq:plus-endpoint-signs} and \eqref{eq:plus-middle-signs}, and using $A_0,A_1>0$, we obtain
\[
  \sgn T(M_m)=\sgn T(M_m+1)=\sgn T(M_m+2)=(-1)^{(M_m-1)/2+m}.
\]
Thus the three main terms have the same sign.

Moreover,
\[
  e^{c\sqrt{M_m+j}}=e^{c\sqrt{M_m}}\bigl(1+O(M_m^{-1/2})\bigr)
  \qquad (j=-1,0,1,2,3),
\]
so
\[
  |T(M_m)|\asymp \frac{e^{c\sqrt{M_m}}}{M_m},
  \qquad
  |T(M_m+1)|\asymp \frac{e^{c\sqrt{M_m}}}{\sqrt{M_m}},
  \qquad
  |T(M_m+2)|\asymp \frac{e^{c\sqrt{M_m}}}{M_m}.
\]
At the odd endpoints, Corollary~\ref{cor:parity} and \eqref{eq:plus-endpoint-sizes} give the error estimate
\[
  O\!\left(\frac{e^{c\sqrt{M_m}}|F_+(y_{m,j})|}{M_m}\right)
  +O\!\left(M_m^{-1/2}e^{c\sqrt{M_m}/2}\right)
  =O\!\left(\frac{e^{c\sqrt{M_m}}}{M_m^{3/2}}\right)
   +O\!\left(M_m^{-1/2}e^{c\sqrt{M_m}/2}\right)
\]
for $j=0,2$, which is $o(e^{c\sqrt{M_m}}/M_m)$. At the even middle term, Corollary~\ref{cor:parity} and \eqref{eq:plus-middle-signs} give
\begin{align*}
  &O\!\left(\frac{e^{c\sqrt{M_m}}|F_-(y_{m,1})|}{M_m}\right)
   +O\!\left(M_m^{-1/2}e^{c\sqrt{M_m}/2}\right) \\
  &\qquad =O\!\left(\frac{e^{c\sqrt{M_m}}}{M_m}\right)
   +O\!\left(M_m^{-1/2}e^{c\sqrt{M_m}/2}\right),
\end{align*}
which is $o(e^{c\sqrt{M_m}}/\sqrt{M_m})$. Writing $V_1(M_m+j)=T(M_m+j)+R(M_m+j)$, we obtain $|R(M_m+j)|=o(|T(M_m+j)|)$ for $j=0,1,2$. Hence, for all sufficiently large $m\in\mathcal M_+$,
\[
  |R(M_m+j)|<\frac12|T(M_m+j)|\qquad (j=0,1,2).
\]
Thus $\sgn V_1(M_m+j)=\sgn T(M_m+j)$ for $j=0,1,2$. Therefore
\[
  V_1(M_m),\qquad V_1(M_m+1),\qquad V_1(M_m+2)
\]
have the same sign for all sufficiently large $m\in\mathcal M_+$. Exactly the same use of Corollary~\ref{cor:parity} with \eqref{eq:plus-middle-signs} also shows that for $j\in\{-1,1,3\}$,
\[
  V_1(M_m+j)=T(M_m+j)+R(M_m+j),
  \qquad
  |R(M_m+j)|=o(|T(M_m+j)|),
\]
so in particular
\[
  |V_1(M_m+j)|\asymp \frac{e^{c\sqrt{M_m}}}{\sqrt{M_m}}
  \qquad (j=-1,1,3).
\]

Finally, \eqref{eq:plus-endpoint-sizes} and Corollary~\ref{cor:parity} give
\[
  |V_1(M_m)|\ll \frac{e^{c\sqrt{M_m}}}{M_m},
  \qquad
  |V_1(M_m+2)|\ll \frac{e^{c\sqrt{M_m}}}{M_m}.
\]
For the even neighbors, the estimates obtained above give
\[
  |V_1(M_m+j)|\gg \frac{e^{c\sqrt{M_m}}}{\sqrt{M_m}}
  \qquad (j\in\{-1,1,3\}).
\]
Since $M_m^{-1}=o(M_m^{-1/2})$, it follows that for all sufficiently large $m\in\mathcal M_+$,
\[
  |V_1(M_m)|<\min\{|V_1(M_m-1)|,|V_1(M_m+1)|\}
\]
and
\[
  |V_1(M_m+2)|<\min\{|V_1(M_m+1)|,|V_1(M_m+3)|\}.
\]
Thus both $M_m$ and $M_m+2$ are strict local minima of the sequence $|V_1(n)|$.
\end{proof}

\section{Proof of Theorem~\ref{thm:main}}\label{sec:proof-thm-main}

For completeness we record the first values displayed by Andrews.

\begin{proposition}\label{prop:initial-values}
A direct expansion of \eqref{eq:v1-def} gives
\[
\begin{aligned}
V_1(292)&=-367,\qquad V_1(293)=4,\qquad V_1(294)=375,\\
V_1(295)&=9,\qquad V_1(296)=-381,
\end{aligned}
\]
\[
\begin{aligned}
V_1(409)&=-465,\qquad V_1(410)=27,\qquad V_1(411)=473,\\
V_1(412)&=4,\qquad V_1(413)=-497,
\end{aligned}
\]
\[
\begin{aligned}
V_1(544)&=6195,\qquad V_1(545)=-18,\qquad V_1(546)=-6309,\\
V_1(547)&=-20,\qquad V_1(548)=6418,
\end{aligned}
\]
\[
\begin{aligned}
V_1(701)&=8365,\qquad V_1(702)=-273,\qquad V_1(703)=-8550,\\
V_1(704)&=-224,\qquad V_1(705)=8716.
\end{aligned}
\]
In particular, the initial values $293$, $410$, $545$, and $702$ satisfy Andrews' Conjectures~5 and~6.
\end{proposition}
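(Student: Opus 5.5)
This proposition is a finite computation, so I will prove it by direct expansion of the defining $q$-series \eqref{eq:v1-def} truncated at $q^{705}$, followed by a check of the sign and minimality conditions from the listed numbers.

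The first step is to justify the truncation. The summand $q^{n(n+1)/2}/(-q^2;q^2)_n$ is a power series starting at $q^{n(n+1)/2}$. Hence only $n$ with $n(n+1)/2\le 705$ contribute to coefficients up to $q^{705}$, which means $n\le 37$. For each such $n$ I will expand $1/(-q^2;q^2)_n=\prod_{k=1}^n(1+q^{2k})^{-1}$ as a power series modulo $q^{706}$. Concretely, I start from the series $1$ and, for each factor, apply the recursion $b_j\mapsto b_j-b_{j-2k}$, run in increasing $j$. I then shift the result by $q^{n(n+1)/2}$ and add it into an accumulator. All arithmetic is exact integer arithmetic, and the array length is about $706$, so the computation is small. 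Reading off $V_1(j)$ for $j=292,\dots,296$, $409,\dots,413$, $544,\dots,548$ and $701,\dots,705$ gives the displayed table. As an independent sanity check, I will compare the signs with Corollary~\ref{cor:parity}, since for instance $c\sqrt{293}$ lies near a zero $s_m$ of $F_+$.

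The second step is to verify the two conjectured properties from the table. For $N=293$, the values $V_1(293),V_1(294),V_1(295)$ are $4,375,9$, all positive, and $|V_1(293)|=4<\min(367,375)$, so $293$ is a local minimum. For $N=410$, the values $27,473,4$ are all positive, and $|V_1(410)|=27<\min(465,473)$. For $N=545$, the values $-18,-6309,-20$ are all negative, and $18<\min(6195,6309)$. For $N=702$, the values $-273,-8550,-224$ are all negative, and $273<\min(8365,8550)$. In each case $N_n+2$ is also a local minimum, which can be checked the same way, although the statement only needs one of the three.

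The only real obstacle is that the expansion must be exact and reproducible. I would therefore record the short program, or the recursion above, and note that $\prod_k(1+q^{2k})^{-1}$ has integer coefficients, so no rounding can enter.
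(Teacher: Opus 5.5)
Your proposal is correct and follows the same route as the paper: truncate the outer sum at $n=37$ (since $38\cdot 39/2=741>705$), expand the remaining terms exactly, and read the same-sign and strict-local-minimum conditions off the table. Your in-place division recursion and your explicit inequality checks (e.g.\ $4<\min\{367,375\}$) spell out what the paper leaves as ``the displayed equalities then show directly.''
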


\begin{proof}
This is a finite coefficient computation from \eqref{eq:v1-def}. For the range shown it is enough to truncate the outer sum in \eqref{eq:v1-def} at $n=37$, since $38\cdot 39/2>705$. The displayed equalities then show directly that for each listed $N$, the three coefficients $V_1(N)$, $V_1(N+1)$, $V_1(N+2)$ have the same sign, while both endpoints are strict local minima of $|V_1(n)|$.
\end{proof}

We can now finish the proof of Theorem~\ref{thm:main}.

\begin{proof}[Proof of Theorem~\ref{thm:main}]
By Proposition~\ref{prop:initial-values}, the integers
\[
  293,\qquad 410,\qquad 545,\qquad 702
\]
already satisfy the required same-sign and local-minimum properties.

Next, by definition,
\[
  N_m=\alpha\left(m+\frac14\right)^2+O(1),
  \qquad
  M_m=\alpha\left(m+\frac34\right)^2+O(1).
\]
The proof of Lemma~\ref{lem:alpha-not-Z} showed that $\alpha>10$. Hence
\[
  \frac{N_m}{(m+8)^2}\to \alpha,
  \qquad
  \frac{M_m}{(m+8)^2}\to \alpha,
\]
so after discarding finitely many elements from $\mathcal M_-$ and $\mathcal M_+$ we may assume simultaneously that Theorems~\ref{thm:minus-family} and \ref{thm:plus-family} apply and that
\[
  N_m>10(m+8)^2\quad(m\in\mathcal M_-),
  \qquad
  M_m>10(m+8)^2\quad(m\in\mathcal M_+).
\]
Now choose recursively $m_j\in\mathcal M_-$ and $\ell_j\in\mathcal M_+$ so that
\[
  m_j\ge 2j-1,
  \qquad
  \ell_j\ge 2j,
\]
and, with
\[
  L_{2j-1}:=N_{m_j},
  \qquad
  L_{2j}:=M_{\ell_j},
\]
we have
\[
  702<L_1<L_2<L_3<\cdots.
\]
This is possible because after each finite exclusion both admissible families still contain arbitrarily large elements. For these choices,
\[
  L_{2j-1}>10(m_j+8)^2\ge 10(2j+7)^2,
  \qquad
  L_{2j}>10(\ell_j+8)^2\ge 10(2j+8)^2.
\]
Hence $L_j>10(j+8)^2$ for every $j\ge1$.

Finally define
\[
  N_5:=293,\qquad N_6:=410,\qquad N_7:=545,\qquad N_8:=702,
\]
and for $j\ge1$ put
\[
  N_{8+j}:=L_j.
\]
Then $N_n>10n^2$ for every $n\ge 9$. Moreover, for each $n\ge9$, the three coefficients
\[
  V_1(N_n),\qquad V_1(N_n+1),\qquad V_1(N_n+2)
\]
have the same sign, and one of the numbers
\[
  |V_1(N_n)|,\qquad |V_1(N_n+1)|,\qquad |V_1(N_n+2)|
\]
is a local minimum of the sequence $|V_1(j)|$, by Theorem~\ref{thm:minus-family} or Theorem~\ref{thm:plus-family} according as $N_n=L_j$ with $j$ odd or even. 
Together with the four initial values, this proves the theorem, i.e. Andrews' Conjectures~5 and~6.
\end{proof}

\end{document}